\algnewcommand\Input{\item[\textbf{Input:}]}%
\algnewcommand\Output{\item[\textbf{Output:}]}%
\algnewcommand\Init{\item[\textbf{Init:}]}%
\newtheorem{myTheorem}{Theorem}
\newtheorem{myLemma}{Lemma}
\newtheorem{myExample}{Example}
\newcommand{\setOfReals}{\mathbb{R}}
\newcommand{\setOfNaturals}{\mathbb{N}}
\newcommand{\setOfNonnegativeIntegers}{\mathbb{N}_0}
\newcommand{\setOfPositiveReals}{\setOfReals_{+}}
\newcommand{\setN}[1]{ [ #1]}
\newcommand{\borel}[1]{\mathcal{B} (#1 )}
\newcommand{\effectiveDomain}[1]{\mathcal{D}(#1)}
\newcommand{\differential}[1]{\, \mathrm{d} #1}
\newcommand{\indicator}[1]{\mathbbm{1}(#1)}
\newcommand{\prob}{\mathsf{P}}
\newcommand{\probOf}[1]{\prob(#1)}
\newcommand{\eqstop}{.}
\newcommand{\eqcomma}{,}
\newcommand{\defeq}{\coloneqq}
\newcommand{\ie}{\textit{i.e.}}
\newcommand{\proofLocation}[2]{\ifthenelse{\boolean{longVersion}}{\text{#1}}{\text{#2}}}
\newcommand{\tdWasiur}[2][]{\ifthenelse{\boolean{draftversion}}{\todo[inline, color=blue!20, caption={2do}, #1]{\begin{minipage}{\textwidth-4pt}\emph{Remark Wasiur:}\\#2\end{minipage}}}{}}
\newcommand{\tdArnab}[2][]{\ifthenelse{\boolean{draftversion}}{\todo[inline, color=orange!20, caption={2do}, #1]{\begin{minipage}{\textwidth-4pt}\emph{Remark Arnab:}\\#2\end{minipage}}}{}}
\newcommand{\tdGeneral}[2][]{\ifthenelse{\boolean{draftversion}}{\todo[inline, color=green!20, caption={2do}, #1]{\begin{minipage}{\textwidth-4pt}\emph{ToDO:}\\#2\end{minipage}}}{}}
\tikzstyle{server}=[circle, line width=0.5pt, rounded corners=0.1mm, draw=black!100, fill=tud3a!100]
\tikzstyle{vertex}=[circle, line width=0.5pt, draw=black!100, fill=tud0a!50]
\tikzstyle{dispatcher} =[and gate US, line width=0.5pt, draw=black!100, fill=tud1a!100]
\tikzstyle{dotbox} = [draw=white, fill=white, rectangle,  inner sep=10pt, inner ysep=20pt]
\tikzset{three_sided/.style={
		draw=none,rectangle, 
		append after command={
			[shorten <= -0.5\pgflinewidth]
			([shift={(-1.5\pgflinewidth,-0.5\pgflinewidth)}]\tikzlastnode.north west)
			edge([shift={( 0.5\pgflinewidth,-0.5\pgflinewidth)}]\tikzlastnode.north east)
			([shift={( 0.5\pgflinewidth,-0.5\pgflinewidth)}]\tikzlastnode.north east)
			edge([shift={( 0.5\pgflinewidth,+0.5\pgflinewidth)}]\tikzlastnode.south east)
			([shift={( 0.5\pgflinewidth,+0.5\pgflinewidth)}]\tikzlastnode.south east)
			edge([shift={(-1.0\pgflinewidth,+0.5\pgflinewidth)}]\tikzlastnode.south west)
		}
	}
}
\begin{document}

  \title{Bounds on the spectral radius of real-valued non-negative  Kernels on measurable spaces}
\author{	Wasiur~R.~KhudaBukhsh\footnote{Department of Electrical Engineering and Information Technology,
		Technische Universit\"{a}t Darmstadt, Germany,
		Email: \href{mailto:wasiur.khudabukhsh@bcs.tu-darmstadt.de}{wasiur.khudabukhsh@bcs.tu-darmstadt.de}, 
	ORCID profile: \href{https://orcid.org/0000-0003-1803-0470}{https://orcid.org/0000-0003-1803-0470}  }      \hspace{1.5mm}\href{https://orcid.org/0000-0003-1803-0470}{\includegraphics[width=3mm]{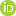}}, \emph{\small Technische Universit\"{a}t Darmstadt} \hfill 	 \\
  Mark Sinzger\footnote{Department of Electrical Engineering and Information Technology,
		Technische Universit\"{a}t Darmstadt, Germany}, \emph{\small Technische Universit\"{a}t Darmstadt}\hfill \\
Heinz Koeppl\footnote{Department of Electrical Engineering and Information Technology,
		Technische Universit\"{a}t Darmstadt, Germany,
		Email:\href{mailto:heinz.koeppl@bcs.tu-darmstadt.de}{heinz.koeppl@bcs.tu-darmstadt.de} }, \emph{\small Technische Universit\"{a}t Darmstadt}  \hfill
}





  \date{}

   \maketitle
  \begin{abstract}
  In this short technical note, we extend a recently published result  \cite{LIAO2017Perron} on the Perron root (or the spectral radius) of non-negative matrices to real-valued non-negative kernels on an arbitrary measurable space $(\mathrm{E}, \mathcal{E})$.  To be precise, for any real-valued non-negative kernel $K : \mathrm{E}\times \mathcal{E} \rightarrow \setOfPositiveReals$, we prove that the spectral radius $\rho(K)$ of $K$ satisfies
\begin{align*}
  \inf_{x \in \mathrm{E}  } \frac{ \mathcal{R} K \cdotp L (x) }{  \mathcal{R} L (x) } \le \rho(K) \le   \sup_{x \in \mathrm{E}  } \frac{ \mathcal{R} K\cdotp L (x) }{  \mathcal{R} L (x) } \eqcomma
\end{align*}
where $L$ is an arbitrary Kernel on $(\mathrm{E}, \mathcal{E})$, which is integrable with respect to the left eigenmeasure of $K$ and  satisfies $ \mathcal{R} L (x) >0 $ for all $x \in \mathrm{E}$,   and the operator $\mathcal{R}$ is defined by $\mathcal{R}L (x) \defeq   \int_{\mathrm{E}} L(x, \differential{y}) $.

  \end{abstract}






\setcounter{equation}{0}
\section{Introduction}
\label{perron:sec:intro}
Perron roots or spectral radii of matrices and their infinite-dimensional counterpart, non-negative Kernels are a useful quantity in applied probability literature. For instance, the asymptotic behaviour of a \ac{GBP} can be studied via the  eigenvalues and eigenfunctions of the ``expectation'' operators, defined as the partial derivative at zero of the \ac{MGF} of a point-distribution representing the objects in the $1$st generation of the \ac{GBP} (see \cite[Chapter III, p.~59, p.~65]{Harris1963Branching}). A second application concerns \acp{LDP} of waiting times in queueing systems under exogenous Markov modulation. Interestingly, the \ac{LDP} rate function is often found to be linear for many queueing systems. In fact, probability bounds of the following type, which are useful for performance evaluation purposes, are often derived using martingale techniques (and also from \acp{LDP})
\begin{align*}
\lim\limits_{k \rightarrow \infty} \probOf{ W_k \geq \sigma } \leq a(\sigma) e^{ - \theta \sigma  } \eqcomma 
\end{align*}
where $W_k$ is the waiting time for the $k$-th job,   the function $a$ is called the prefactor, and the scalar $\theta$ is called the \emph{effective decay rate} of the queueing system. Here, the effective decay rate~$\theta$ turns out to be the spectral radius of a certain exponentially transformed transition kernel for Markov-modulated systems (see \cite{Rizk2015Sigmetrics,KhudaBukhsh2016GenPE,KhudaBukhsh2018PhD,KhudaBukhsh2018TOMPECS}). Since the effective decay rate~$\theta$  depends on the particular scheduling algorithm chosen, bounds of the above form are very useful for performance evaluation purposes. In essence, the spectral radius $\theta$ determines the performance of the entire system. 

It is evident from the two examples discussed in the previous paragraph that studying  the spectral radius is often beneficial for asymptotic analysis of many stochastic processes arising in applied probability literature. While the exact value of the spectral radius is desirable and most informative, it might be computationally expensive, or even infeasible to find with high degree of accuracy. Moreover, a bound (upper or lower) often suffices for a qualitative analysis. In this short technical note, we extend a recently published result  \cite{LIAO2017Perron} on the Perron root of non-negative matrices to real-valued non-negative kernels on an arbitrary measurable space $(\mathrm{E}, \mathcal{E})$.  To be precise, for any real-valued non-negative kernel $K : \mathrm{E}\times \mathcal{E} \rightarrow \setOfPositiveReals$, we prove that the spectral radius $\rho(K)$ of the kernel $K$ satisfies 
\begin{align*}
\inf_{x \in \mathrm{E}  } \frac{ \mathcal{R} K \cdotp L (x) }{  \mathcal{R} L (x) } \le \rho(K) \le   \sup_{x \in \mathrm{E}  } \frac{ \mathcal{R} K\cdotp L (x) }{  \mathcal{R} L (x) } \eqcomma
\end{align*}
where $L$ is an arbitrary Kernel on $(\mathrm{E}, \mathcal{E})$, which is integrable with respect to the left eigenmeasure of $K$ and  satisfies $ \mathcal{R} L (x) >0 $ for all $x \in \mathrm{E}$,  and the operator $\mathcal{R}$ is defined by $\mathcal{R}L (x) \defeq   \int_{\mathrm{E}} L(x, \differential{y}) $.  In the next section, we provide a short proof of this bound. In \cite{ANSELONE1974Spectral}, the authors proved similar bounds in the context of integral operators with non-negative kernels. 

\setcounter{equation}{0}

\section{Main result}
\label{sec:main_result}

\subsection{Notational conventions}
The following notational conventions are adhered to throughout this technical report. We denote the set of natural numbers and the set of real numbers  by $\setOfNaturals$   and $\setOfReals$ respectively. Let  $\setOfNonnegativeIntegers \defeq \setOfNaturals \cup \{0\}$. For $N \in \setOfNaturals$, let $\setN{N} \defeq \{1,2,\ldots, N \}$. The set of non-negative real numbers is denoted by $\setOfPositiveReals$.  For $A \subseteq \setOfReals$, we denote the Borel $\sigma$-field of subsets of $A$ by $\borel{A}$. For any $f: \setOfReals \rightarrow \setOfReals$, we denote the effective domain of $f$ by $\effectiveDomain{f}$, \ie, $\effectiveDomain{f} \defeq \{ x \in \setOfReals \mid \lvert f(x) \rvert < \infty \}$. For an event $A$, we denote the indicator function of $A$ by $\indicator{A}$, taking value unity when $A$ is true and zero otherwise.

\subsection{Bounds on the Perron root}
Let $(\mathrm{E}, \mathcal{E})$ be a given measurable space. The set $\mathrm{E}$ need not be countable, but we  assume $\mathrm{E}$ is  a complete and separable metric space.   Denote the space of real-valued non-negative Kernels on $(\mathrm{E}, \mathcal{E})$ by $\mathrm{K}_{\mathrm{E}}$, \ie,
$\mathrm{K}_{\mathrm{E}} \defeq \{ f : \mathrm{E}\times \mathcal{E} \rightarrow \setOfPositiveReals \}$. Given $K \in \mathrm{K}_{\mathrm{E}}  $, define the   operator $  \mathcal{R} $ as
\begin{align}
  \mathcal{R} K(x) \defeq   \int_{\mathrm{E}} K(x, \differential{y})  \eqstop
  \label{eq:row_sum}
\end{align}
The operator $ \mathcal{R}$ is analogous to the row sum in the finite case, \ie, when $\mathrm{E}$ is finite and $K$ is  a real non-negative matrix. We are interested in getting upper and lower bounds on the spectral radius  $\rho(K)$ of the kernel~$K$. 
Before presenting our main result, we have the following two lemmas.

\begin{myLemma}
Suppose $f : \mathrm{E} \rightarrow \setOfPositiveReals$ and $g :\mathrm{E} \rightarrow \setOfPositiveReals$ are such that $\effectiveDomain{f}=\effectiveDomain{g}=\mathrm{E}$ and $ 0 < \int_{\mathrm{E}}  f \differential{ \mu } < \infty,  0 < \int_{\mathrm{E}}  g \differential{ \mu } < \infty $. Then, given a measure~$\mu$ on $\mathrm{E} $,
\begin{align}
  \inf_{x \in \mathrm{E}  } \frac{ f(x) }{  g (x) }  \le \frac{ \int_{\mathrm{E}}  f \differential{ \mu }   }{ \int_{\mathrm{E}}  g \differential{\mu} }   \le   \sup_{x \in \mathrm{E}  } \frac{ f(x) }{  g (x) } \eqstop
\end{align}
\label{lemma:lemma1}
\end{myLemma}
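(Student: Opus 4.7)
The plan is to reduce the statement to the elementary fact that an integral of the form $\int f \, d\mu$ lies between the infimum and supremum of its pointwise ``density'' against $g$. Specifically, set
\[
m \defeq \inf_{x \in \mathrm{E}} \frac{f(x)}{g(x)}, \qquad M \defeq \sup_{x \in \mathrm{E}} \frac{f(x)}{g(x)},
\]
adopting the convention that $f(x)/g(x) = +\infty$ whenever $g(x) = 0$ and $f(x) > 0$, and $0/0 = 0$. The first step is to establish the pointwise sandwich $m\, g(x) \le f(x) \le M\, g(x)$ for every $x \in \mathrm{E}$: on the subset $\{g > 0\}$ this is immediate from the definitions of $m$ and $M$ together with $g(x) > 0$, and on $\{g = 0\}$ it holds trivially because both $m\, g(x)$ and $M\, g(x)$ vanish (or $M = \infty$ makes the upper bound vacuous), while $f(x) \ge 0$ by the non-negativity assumption $f : \mathrm{E} \to \setOfPositiveReals$.

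Next, I would integrate the pointwise sandwich with respect to $\mu$. Because $\mu$ is a (non-negative) measure, monotonicity of the Lebesgue integral gives
\[
m \int_{\mathrm{E}} g \differential{\mu} \le \int_{\mathrm{E}} f \differential{\mu} \le M \int_{\mathrm{E}} g \differential{\mu}.
\]
Finally, invoking the hypothesis $0 < \int_{\mathrm{E}} g \differential{\mu} < \infty$, I can divide throughout by $\int_{\mathrm{E}} g \differential{\mu}$, which is a strictly positive finite number and therefore preserves the direction of the inequalities, yielding the desired chain
\[
\inf_{x \in \mathrm{E}} \frac{f(x)}{g(x)} \le \frac{\int_{\mathrm{E}} f \differential{\mu}}{\int_{\mathrm{E}} g \differential{\mu}} \le \sup_{x \in \mathrm{E}} \frac{f(x)}{g(x)}.
\]

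There is no serious obstacle here; the only subtlety worth flagging is the cosmetic one of handling points where $g$ vanishes, which is dispatched by the conventions above and does not affect the integrated inequality since the integrability condition on $g$ guarantees that these issues have no bearing on $\int g \differential{\mu}$. Everything else reduces to monotonicity of the integral and a single division by a positive scalar.
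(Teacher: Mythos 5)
Your proof is correct and is essentially the same argument as the paper's: both reduce to the pointwise sandwich $m\,g(x) \le f(x) \le M\,g(x)$, integrated against $\mu$ and then divided by $\int_{\mathrm{E}} g \differential{\mu} > 0$. The paper packages the same computation as bounding $\int_{\mathrm{E}} (f/g)\differential{\nu}$ for the normalized measure $\nu(A) = \int_{A} g \differential{\mu} / \int_{\mathrm{E}} g \differential{\mu}$, whereas you work directly with $\mu$ and are slightly more careful about the points where $g$ vanishes.
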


\begin{proof}[Proof of Lemma~\ref{lemma:lemma1}]
Note that
\begin{align*}
 \frac{ \int_{\mathrm{E}}  f \differential{ \mu }   }{ \int_{\mathrm{E}}  g \differential{\mu} }   =  \frac{ 1 }{ \int_{\mathrm{E}}  g \differential{\mu} }   \int_{\mathrm{E}}  \frac{f}{g} g \differential{ \mu }  
 = \int_{\mathrm{E}}  \frac{f}{g}  \differential{ \nu }   \eqcomma
\end{align*}
where the probability measure $\nu$ on $(\mathrm{E}, \mathcal{E})$ is defined by
\begin{align*}
  \nu (A) \defeq \frac{ 1 }{ \int_{\mathrm{E}}  g \differential{\mu} }   \int_{\mathrm{A}}  g \differential{\mu} , \text{ for } A \in \mathcal{E}  \eqstop
\end{align*}
Note that,
\begin{align*}
   \int_{\mathrm{E}}  \frac{f}{g}  \differential{ \nu }   \ge {} & \inf_{x \in \mathrm{E}  } \frac{ f(x) }{  g (x) }  \int_{\mathrm{E}}  \differential{ \nu }  =  \inf_{x \in \mathrm{E}  } \frac{ f(x) }{  g (x) } \nu(\mathrm{E})  = \inf_{x \in \mathrm{E}  } \frac{ f(x) }{  g (x) }  \eqcomma \\
\text{and } \int_{\mathrm{E}}  \frac{f}{g}  \differential{ \nu }   \le {} & \sup_{x \in \mathrm{E}  } \frac{ f(x) }{  g (x) }  \int_{\mathrm{E}}  \differential{ \nu }   =  \sup_{x \in \mathrm{E}  } \frac{ f(x) }{  g (x) } \nu(\mathrm{E}) = \sup_{x \in \mathrm{E}  } \frac{ f(x) }{  g (x) } \eqstop
\end{align*}
Therefore, we get
\begin{align}
  \inf_{x \in \mathrm{E}  } \frac{ f(x) }{  g (x) }  \le \frac{ \int_{\mathrm{E}}  f \differential{ \mu }   }{ \int_{\mathrm{E}}  g \differential{\mu} }   \le   \sup_{x \in \mathrm{E}  } \frac{ f(x) }{  g (x) } \eqstop
\end{align}

\end{proof}

Next, we define the product of two kernels on the measurable space $(\mathrm{E}, \mathcal{E})$. Let $F$, and $ G$ be two kernels on $(\mathrm{E}, \mathcal{E})$. Analogous to matrix multiplication, define the new kernel $F \cdotp G  :   \mathrm{E}\times \mathcal{E} \rightarrow   \setOfReals  \cup \{ \infty, - \infty \} $ as
\begin{align}
 F \cdotp  G    (x, A) \defeq \int_{\mathrm{E}}  F(x, \differential{y} )   G(y, A) \eqstop
\end{align}
The next lemma concerns the row sums of the product of two kernels.

\begin{myLemma}
Let $F$, and $ G$ be two kernels on $(\mathrm{E}, \mathcal{E})$ such that $\effectiveDomain{   \mathcal{R}  F \cdotp  G  }=\mathrm{E}$. And suppose, that $\int_{E} \int_{E} \vert F(x, \,\mathrm d y) G(y, \,\mathrm d z) \vert < \infty$ holds. Then,
\begin{align}
  \mathcal{R}  F \cdotp  G  (x) = \int_{ \mathrm{E}  } F( x, \differential{y} ) \mathcal{R} G(y)  \eqstop
\end{align}
\label{lemma2}
\end{myLemma}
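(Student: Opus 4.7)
The plan is essentially to unpack both sides of the identity using the definition of the kernel product and then justify the interchange of the inner and outer integrations via Fubini's theorem; the integrability hypothesis $\int_{\mathrm{E}}\int_{\mathrm{E}} \lvert F(x,\mathrm d y) G(y,\mathrm d z)\rvert < \infty$ is included precisely to legitimize this exchange.

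First I would expand the left-hand side using the definition of $\mathcal{R}$: by \eqref{eq:row_sum} one has $\mathcal{R}(F\cdotp G)(x) = \int_{\mathrm{E}} (F\cdotp G)(x,\mathrm d z)$. Plugging in the definition of the product kernel gives
\begin{align*}
\mathcal{R}(F\cdotp G)(x) \;=\; \int_{\mathrm{E}} \left( \int_{\mathrm{E}} F(x,\mathrm d y)\, G(y,\mathrm d z) \right).
\end{align*}
This is the iterated integral of the nonnegative (and, by hypothesis, finite) integrand against the two measures $F(x,\cdot)$ and $G(y,\cdot)$.

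Next I would invoke Fubini's theorem — more precisely, its kernel version, which holds for two transition kernels on a Polish space, an assumption already made on $(\mathrm{E},\mathcal{E})$ in the preceding subsection. The nonnegativity of $F$ and $G$ together with the assumed finiteness $\int\int \lvert F(x,\mathrm d y) G(y,\mathrm d z) \rvert < \infty$ means that Tonelli/Fubini applies without ambiguity, so the order of integration can be swapped:
\begin{align*}
\int_{\mathrm{E}} \int_{\mathrm{E}} F(x,\mathrm d y)\, G(y,\mathrm d z) \;=\; \int_{\mathrm{E}} F(x,\mathrm d y)\, \int_{\mathrm{E}} G(y,\mathrm d z).
\end{align*}
Recognizing the inner integral as $\mathcal{R}G(y)$ by the definition in \eqref{eq:row_sum} then gives precisely the claimed identity $\mathcal{R}(F\cdotp G)(x) = \int_{\mathrm{E}} F(x,\mathrm d y)\, \mathcal{R}G(y)$.

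The main (and only) technical point is the measurability and integrability bookkeeping that makes Fubini applicable: one should note that since $F$ and $G$ are kernels, $y\mapsto G(y,A)$ is measurable for every $A\in\mathcal{E}$, hence $y\mapsto \mathcal{R}G(y) = G(y,\mathrm{E})$ is measurable, and the assumption $\effectiveDomain{\mathcal{R}(F\cdotp G)} = \mathrm{E}$ combined with the integrability hypothesis guarantees finiteness of all quantities involved. Once these standard measure-theoretic conditions are acknowledged, the computation is a one-line application of Fubini, so there is no genuine obstacle beyond careful use of the definitions.
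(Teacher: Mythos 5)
Your proposal is correct and follows essentially the same route as the paper: expand $\mathcal{R}(F\cdotp G)(x)$ via the definitions, apply Fubini to swap the order of integration (justified by the hypothesis $\int_{\mathrm{E}}\int_{\mathrm{E}}\lvert F(x,\differential{y})G(y,\differential{z})\rvert<\infty$), and identify the inner integral as $\mathcal{R}G(y)$. The only slight imprecision is your appeal to ``nonnegativity of $F$ and $G$'': the lemma allows signed kernels (as the paper's later counterexample shows), but this does not matter since the stated absolute integrability is exactly what Fubini requires.
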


\begin{proof}[Proof of Lemma~\ref{lemma2}]
  Note that by Fubini's theorem,
\begin{align*}
  \mathcal{R}  F \cdotp  G  (x) = {} &  \int_{ \mathrm{E}  }    F \cdotp  G   (x, \differential{z}) \\
  = {} &  \int_{ \mathrm{E}  }  \int_{\mathrm{E}}  F(x, \differential{y} )   G(y,  \differential{z} )  \\
  ={} &  \int_{ \mathrm{E}  }  F(x, \differential{y} )  \int_{\mathrm{E}}    G(y,  \differential{z} )  \\
  ={} &   \int_{ \mathrm{E}  }  F(x, \differential{y} )   \mathcal{R} G(y)   \eqstop
\end{align*}

This completes the proof.

\end{proof}

Now we present our main result on the Perron root $\rho(K)$ of $K$.

\begin{myTheorem}
Let $K$ be a real-valued non-negative kernel, \ie,  $K \in \mathrm{K}_{\mathrm{E}}$.  Let $L$ be a kernel on $(\mathrm{E}, \mathcal{E})$, which is not necessarily in $ \mathrm{K}_{\mathrm{E}}$, but satisfies $\int_{E} \int_{E} \vert K(x, \,\mathrm d y) L(y, \,\mathrm d z) \vert < \infty$, $ \mathcal{R} L (x) >0 $ for all $x \in \mathrm{E}$ and is integrable with respect to the left eigenmeasure of $K$. Then,
\begin{align*}
  \inf_{x \in \mathrm{E}  } \frac{ \mathcal{R} K \cdotp L (x) }{  \mathcal{R} L (x) } \le \rho(K) \le   \sup_{x \in \mathrm{E}  } \frac{ \mathcal{R} K \cdotp L (x) }{  \mathcal{R} L (x) } \eqstop
\end{align*}
\label{thm:mainTheorem}
\end{myTheorem}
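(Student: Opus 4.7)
The plan is to reduce the theorem to the two preceding lemmas by identifying the right functions and the right measure. Let $\mu$ denote the left eigenmeasure of $K$, so that $\mu K(A) = \int_{\mathrm{E}} K(x,A)\,\mu(\differential{x}) = \rho(K)\,\mu(A)$ for every $A \in \mathcal{E}$. The hypothesis that $L$ is integrable with respect to $\mu$ is exactly what makes the computation below finite.

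First I would apply Lemma~\ref{lemma2} to rewrite the numerator as a single integral against $K$:
\begin{align*}
\mathcal{R}\, K \cdotp L(x) = \int_{\mathrm{E}} K(x,\differential{y})\, \mathcal{R} L(y),
\end{align*}
which is legitimate because the integrability assumption $\int_{\mathrm{E}} \int_{\mathrm{E}} |K(x,\differential{y}) L(y,\differential{z})| < \infty$ in the theorem coincides with the hypothesis of Lemma~\ref{lemma2}. Next I would integrate both sides against $\mu$ and swap the order of integration via Fubini's theorem, yielding
\begin{align*}
\int_{\mathrm{E}} \mathcal{R}\, K \cdotp L(x)\,\mu(\differential{x}) = \int_{\mathrm{E}} \mathcal{R} L(y)\, \mu K(\differential{y}) = \rho(K)\int_{\mathrm{E}} \mathcal{R} L(y)\, \mu(\differential{y}),
\end{align*}
where the eigenmeasure identity is used in the last step. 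Dividing gives the exact identity
\begin{align*}
\rho(K) = \frac{\int_{\mathrm{E}} \mathcal{R}\, K \cdotp L\, \differential{\mu}}{\int_{\mathrm{E}} \mathcal{R} L\, \differential{\mu}}.
\end{align*}

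Finally I would apply Lemma~\ref{lemma:lemma1} with $f = \mathcal{R}\, K \cdotp L$, $g = \mathcal{R} L$, and the measure $\mu$. The assumption $\mathcal{R} L(x) > 0$ on all of $\mathrm{E}$ makes the ratio well-defined, the positivity of $K$ keeps $f \ge 0$, and the integrability conditions ensure that the denominator $\int_{\mathrm{E}} \mathcal{R} L\, \differential{\mu}$ is strictly positive and finite. Lemma~\ref{lemma:lemma1} then sandwiches the ratio above between the pointwise infimum and supremum of $\mathcal{R}\, K \cdotp L/\mathcal{R} L$, which is exactly the claim.

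The main obstacle I anticipate is technical rather than conceptual: checking that the hypotheses of Lemma~\ref{lemma:lemma1} truly hold in this setting, in particular that $0 < \int \mathcal{R} L\, \differential{\mu} < \infty$ (one needs $\mathcal{R} L > 0$ together with $\mu$-integrability of $L$), and that Fubini's theorem can be applied under the stated absolute integrability condition. Once these bookkeeping issues are discharged, the proof is essentially two lines wiring together the two lemmas through the eigenmeasure identity.
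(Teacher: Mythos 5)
Your proposal is correct and follows essentially the same route as the paper's own proof: invoke the left eigenmeasure identity, integrate $\mathcal{R}L$ against it with an application of Fubini and Lemma~\ref{lemma2} to obtain the exact ratio representation of $\rho(K)$, and then sandwich that ratio via Lemma~\ref{lemma:lemma1} with $\mu = l_K$, $f = \mathcal{R}\,K\cdotp L$, and $g = \mathcal{R}L$. The only difference is cosmetic ordering of the steps, so there is nothing substantive to add.
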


\begin{proof}[Proof of Theorem~\ref{thm:mainTheorem}]
Let $r_{K} :\mathrm{E} \rightarrow \setOfReals  $ and $l_{K} : \mathcal{E} \rightarrow \setOfPositiveReals  $ be respectively the right eigenfunction and the left eigenmeasure of $K$ associated with $\rho(K)$. That is,
\begin{align*}
\rho(K)  r_{K}(x)  = {}& \int_{\mathrm{E}} K(x, \differential{y} )   r_{K}(y) \eqcomma   \forall   x \in \mathrm{E}  \eqcomma \\
\rho(K)  l_{K}(A)  ={} &   \int_{\mathrm{E}}   l_K(\differential{x})   K( x, A )  \eqcomma  \forall  A \in \mathcal{E} \eqstop
\end{align*}
Note that $\rho(K)$ is the largest simple eigenvalue of $K$. The existence of $\rho(K),   l_K$, and $r_K$ are guaranteed by  \cite[Theorem III.10.1]{Harris1963Branching}.

 Let $L$ be an arbitrary kernel on $(\mathrm{E}, \mathcal{E})$, which is not necessarily in $ \mathrm{K}_{\mathrm{E}}$, but satisfies $\int_{E} \int_{E} \vert K(x, \,\mathrm d y) L(y, \,\mathrm d z) \vert < \infty$,  $ \mathcal{R} L (x) >0 $ for all $x \in \mathrm{E}$ and is integrable with respect to $l_{K}$ as a measure on $\mathrm{E}$. Then, by the definition of the left eigenmeasure $l_K$, we have
\begin{align*}
    \int_{\mathrm{E}}  l_K(\differential{x})  K( x,  \differential{y}  )   ={} &  \rho(K)  l_{K}(  \differential{y} )  \\
    \implies   \int_{\mathrm{E}}   \mathcal{R} L (y)      \int_{\mathrm{E}}  l_K(\differential{x})  K( x,  \differential{y}  )   ={} &  \rho(K)  \int_{\mathrm{E}}  l_{K}(  \differential{y} )    \mathcal{R} L (y)       \\
    \implies  \int_{\mathrm{E}}  l_K(\differential{x})   \int_{\mathrm{E}}  K( x,  \differential{y}  )   \mathcal{R} L (y)    ={} &  \rho(K)  \int_{\mathrm{E}}  l_{K}(  \differential{y} )   \mathcal{R} L (y) \\
    \implies  \int_{\mathrm{E}}  l_K(\differential{x})     \mathcal{R}  K \cdotp  L  (x)  ={} &  \rho(K)  \int_{\mathrm{E}}  l_{K}(  \differential{y} )   \mathcal{R} L (y) \eqcomma
\end{align*}
because $  \mathcal{R}  K \cdotp  L (x) = \int_{ \mathrm{E}  } K( x, \differential{y} ) \mathcal{R} L(y)  $, by Lemma~\ref{lemma2}. Therefore we have
\begin{align*}
   \rho(K)  ={} & \frac{   \int_{\mathrm{E}}  l_K(\differential{x})     \mathcal{R}  K \cdotp  L  (x)    }{ \int_{\mathrm{E}}  l_{K}(  \differential{x} )   \mathcal{R} L (x) }  \eqstop
\end{align*}
Choosing $\mu = l_K$, $f = \mathcal{R}  K \cdotp  L    $, and $g =   \mathcal{R} L $ in Lemma~\ref{lemma:lemma1}, we get
\begin{align}
  \inf_{x \in \mathrm{E}  } \frac{ \mathcal{R} K \cdotp L (x) }{  \mathcal{R} L (x) } \le \rho(K) \le   \sup_{x \in \mathrm{E}  } \frac{ \mathcal{R} K \cdotp L (x) }{  \mathcal{R} L (x) } \eqstop
\end{align}
This completes the proof.


\end{proof}

The condition $\int_{E} \int_{E} \vert F(x, \,\mathrm d y) G(y, \,\mathrm d z) \vert < \infty$  is necessary and hence, must not be omitted,  as shown in the following example. 

\begin{myExample}
Consider the complete separable metric space $E = [0,1]$. Define $F(x,\,\mathrm d y) = \,\mathrm d y$ to be the Lebesgue measure on $[0,1]$ independent of $x$. The kernel $F$ is non-negative and allows for the Lebesgue measure as left eigenmeasure. Furthermore, define  the function $g \colon E \times E \to \mathbb R$
\[
g(y,z) := \begin{cases}
-\frac{1}{z^2}&, \quad 0 <y < z \le 1\\
\frac{1}{y^2}&, \quad 0 < z \le y \le 1\\
-1&, \quad z = 0, y \neq 0\\
1&, \quad y = 0 \eqstop 
\end{cases}
\]
Note that $g$ is measurable with respect to the Lebesgue measure on $E\times E$ and defines the kernel
\[
G(y,\,\mathrm d z) = g(y,z) \,\mathrm d z.
\]
For all $y \in (0,1]$, calculate
\[
\mathcal R G(y) = \int_0^1 g(y,z) \,\mathrm d z = \int_0^y \frac{1}{y^2} \,\mathrm d z - \int_{y}^1 \frac{1}{z^2} \,\mathrm d z = 1 > 0 \eqcomma 
\]
and for $y = 0$,
\[
\mathcal R G(0) = \int_0^1 \,\mathrm d z = 1 >0.
\]
Further, $\mathcal R G(y)$ is integrable with respect to the left eigenmeasure of $F$.
The product of the kernels is
\[
F\cdot G (x, \,\mathrm d z) = \left( \int_0^1 g(y,z) \,\mathrm d y \right) \,\mathrm d z = \left(\int_0^z -\frac{1}{z^2} \,\mathrm d y + \int_z^1 \frac{1}{y^2} \,\mathrm d y \right) \,\mathrm d z = - \,\mathrm d z.
\]
Hence
\[
\mathcal R F\cdot G(x) = \int_0^1 F\cdot G (x, \,\mathrm d z) = -1
\]
independent of $x$. This implies that $\inf_{x \in \mathrm{E}  } \frac{ \mathcal{R} F \cdotp G (x) }{  \mathcal{R} G(x) }$ and $\sup_{x \in \mathrm{E}  } \frac{ \mathcal{R} F \cdotp G (x) }{  \mathcal{R} G (x) }$ both equal $-1$, even though the Perron root $\rho(F)$ is $1$.

\end{myExample}



%

\appendix

%
%

\section*{Acronyms}


	\acrodefplural{GBP}{General Branching Processes}
\begin{acronym}[OWL-QN]
	\acro{ABM}{Agent-based Model}   
	\acro{ADMM}{Alternating Direction Method of Multipliers} 
	\acro{BA}{Barab\'asi-Albert}
	\acro{BCS}{Bioinspired Communication Systems}
	\acro{CBQA}{Cost-Based Queue-Aware}
	\acro{CCDF}{Complementary Cumulative Distribution Function}  
	\acro{CDF}{Cumulative Distribution Function}   
	\acro{CDN}{Content Distribution Network}   
	\acro{CIM}{Conditional Intensity Matrix}
	\acro{CLT}{Central Limit Theorem}   
	\acro{CM}{Configuration Model}    
	\acro{CME}{Chemical Master Equation}   
	\acro{CRC}{Collaborative Research Centre}   
	\acro{CRN}{Chemical Reaction Network}   
	\acro{CTBN}{Continuous Time Bayesian Network} 
	\acro{CTMC}{Continuous Time Markov Chain}  
	\acro{DCFTP}{Dominated Coupling From The Past }
	\acro{DFG}{German Research Foundation}   
	\acro{DTMC}{Discrete Time Markov Chain}  
	\acro{DTMC}{Discrete Time Markov Chain} 
	\acro{ECMP}{Equal-cost Multi-path routing}   
	\acro{EDF}{Earliest Deadline First}   
	\acro{ER}{Erd\"{o}s-R\'{e}nyi}
	\acro{ESI}{Enzyme-Substrate-Inhibitor}     
	\acro{FCFS}{First Come First Served}  
	\acro{FCLT}{Functional Central Limit Theorem}  
	\acro{FIFO}{First In First Out}  
	\acro{FJ}{Fork-Join}   
	\acro{GBP}{General Branching Process}  

	\acro{ID}{Information-Dissemination}     
	\acro{iid}{independent and identically distributed}  
	\acro{IoT}{Internet of Things}   
	\acro{IPS}{Interacting Particle System}   
	\acro{IT}{Information Technology}    
	\acro{JIQ}{Join-Idle-Queue}     
	\acro{JMC}{Join-Minimum-Cost}
	\acro{JSQ}{Join-Shortest-Queue}     
	\acro{KL}{Kullback-Leibler}     
	\acro{LDF}{Latest Deadline First}   
	\acro{LDP}{Large Deviations Principle}   
	\acro{LLN}{Law of Large Numbers}  
	\acro{LNA}{Linear Noise Approximation}
	\acro{MABM}{Markovian Agent-based Model}   
	\acro{MAKI}{Multi-Mechanism Adaptation for the Future Internet}
	\acro{MAPK}{Mitogen-activated Protein Kinase}
	\acro{MDS}{Maximum Distance Separable}
	\acro{MGF}{Moment Generating Function}   
	\acro{MM}{Michaelis-Menten}     
	\acro{MPI}{Message Passing Interface}   
	\acro{MPTCP}[Multi-path TCP]{Multi-path Transmission Control Protocol}   
	\acro{ODE}{Ordinary Differential Equation}   
	\acro{P2P}{Peer-to-Peer}     
	
	\acro{PDF}{Probability Density Function}
	\acro{PGF}{Probability Generating Function}   
	\acro{PGM}{Probabilistic Graphical Model}
	\acro{PMF}{Probability Mass Function}
	\acro{psd}{positive semi-definite}
	\acro{PT}{Poisson-type}
	\acro{QSSA}{Quasi-Steady State Approximation}   
	\acro{rQSSA}{reversible QSSA}    
	\acro{SAN}{Stochastic Automata Network}
	\acro{SEIR}{Susceptible-Exposed-Infected-Recovered}     
	\acro{SI}{Susceptible-Infected}     
	\acro{SIR}{Susceptible-Infected-Recovered}     
	\acro{SIS}{Susceptible-Infected-Susceptible}     
	\acro{sQSSA}{standard QSSA}    
	\acro{SRPT}{Shortest Remaining Processing Time}  
	\acro{ssLNA}{Slow-scale Linear Noise Approximation}
	\acro{TCP}{Transmission Control Protocol}   
	\acro{tQSSA}{total QSSA}  
	\acro{WS}{Watts-Strogatz}  	
	\acro{whp}{with high probability}
\end{acronym}


\section*{ACKNOWLEDGEMENTS}
This work  has been funded by the German Research Foundation~(DFG) as part of project C3 within the Collaborative Research Center~(CRC) 1053 -- MAKI.

\bibliographystyle{abbrv}


\end{document}